\documentclass[11pt]{amsart}

\usepackage{graphicx}
\usepackage{amsmath}
\usepackage{amssymb}
\usepackage{mathrsfs} 
\usepackage{xcolor}

\usepackage[utf8]{inputenc}

\newtheorem{thm}{Theorem}

\newtheorem{lem}[thm]{Lemma}
\newtheorem{cor}[thm]{Corollary}

\theoremstyle{definition}
\newtheorem{definition}[thm]{Definition}

\newcommand{\scalprod}[2]{\langle{#1},{#2}\rangle}

\theoremstyle{remark}

\newcommand{\Set}[2]{\Big\{{#1}\,\Big|\;{#2}\Big\}}
\newcommand{\SSet}[2]{\Bigg\{{#1}\; \Bigg|\ {#2}\Bigg\}}

\newcommand{\R}{\mathbb{R}}  
\newcommand{\C}{\mathbb{C}} 
\newcommand{\Z}{\mathbb{Z}}

\newcommand{\N}{\mathbb{N}}
\newcommand{\T}{\mathbb{T}}

\newcommand{\TT}{\mathcal{T}}
\newcommand{\FF}{\mathcal{F}}
\newcommand{\E}{\mathbb{E}}
\newcommand{\A}{\mathbb{A}}

\newcommand{\AB}{\mathcal{A}}

\newcommand{\WW}{\mathfrak{W}}
\DeclareMathOperator{\M}{M}

\DeclareMathOperator{\Diff}{Diff}

\renewcommand{\epsilon}{\varepsilon}
\newcommand{\eps}{\epsilon}

\newcommand{\mres}{%
  \,\raisebox{-.127ex}{\reflectbox{\rotatebox[origin=br]{-90}{$\lnot$}}}\,%
 }

\begin{document}

\title[Alberti's type rank one theorem for martingales
]{Alberti's type rank one theorem for martingales
}

\subjclass[2020]{}
\keywords{vector measures, martingales, rank-one property}

\author{Rami Ayoush}
\address{Institute of Mathematics, University of Warsaw, Banacha 2, 02-097, Warsaw, Poland}

\email{r.ayoush@uw.edu.pl}

\author{Dmitriy Stolyarov}
\address{Department of Mathematics and Computer Science, St. Petersburg State University; 199178,14th line 29, Vasilyevsky Island, St. Petersburg, Russia}

\address{St. Petersburg Department of Steklov Mathematical Institute, 191023, 27, Fontanka, St. Petersburg, Russia}
\email{d.m.stolyarov@spbu.ru}

\author{Michał Wojciechowski}
\address{Institute of Mathematics, Polish Academy of Sciences \\ Śniadeckich 8 \\00-656 Warsaw, Poland}
\email{miwoj@impan.pl}
\thanks{R.A. and M.W. were supported by the National Science Centre, Poland, CEUS programme, project no. 2020/02/Y/ST1/00072. D. S. is supported by the Basis Foundation grant no. 21-7-2-12-1.}

\begin{abstract}
We prove that the polar decomposition of the singular part of a vector measure depends on its conditional expectations computed with respect to the $q$-regular filtration. This dependency is governed by a martingale analog of the so-called wave cone, which naturally corresponds to the result of De Philippis and Rindler about fine properties of PDE-constrained vector measures. As a corollary we obtain a martingale version of Alberti's rank-one theorem.
\end{abstract}

\maketitle


The main goal of this paper is to deliver yet another example of deep correspondence between Fourier analysis and martingale theory. 
The theorem in which we are interested is an analog of the result of De Philippis and Rindler concerning polar decomposition of PDE-constrained measures (\cite{DR}). The original theorem says the following:
\begin{thm}[\cite{DR}, Theorem 1.1]\label{polar}
Let 
\[
\mathcal{A}(D) = \sum_{|\alpha| \leq r}A_{\alpha}(\partial^{\alpha}), \quad A_{\alpha} \in M_{n\times m}(\R),
\]
be a constant-coefficient linear operator that maps~$\mathbb{R}^m$-valued functions in~$N$ variables to~$\R^n$-valued functions, with the principal symbol
\[
\A^{r}[\xi] = \sum_{|\alpha| = r}A_{\alpha}\xi^{\alpha}, \quad  \xi \in \R^N.
\]
Suppose that a locally finite vector measure $\mu \in \M(\Omega; \R^{m})$, where~$\Omega \subset \R^{N}$ is an arbitrary domain, satisfies
\begin{equation}\label{DifferentialConstraint}
\mathcal{A}(D) \mu = 0 \quad \text{in}\ \mathcal{D}'(\Omega; \R^{n}).
\end{equation}
Then,
\begin{equation}\label{wavecone}
\frac{d\mu}{d|\mu|}(x) \in \bigcup_{\xi \in \R^{n}\setminus \{0\}} \ker \A^{r}[\xi] \quad \text{for} \ |\mu_{s}|\text{-a.e.} \ x.
\end{equation}
\end{thm}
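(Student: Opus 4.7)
The plan is a blow-up argument combined with a rigidity statement for $\mathcal{A}$-free measures having constant polar. Fix $x_{0}\in\supp|\mu_{s}|$ at which the Lebesgue point property of the polar $a:=\frac{d\mu}{d|\mu|}(x_{0})$ holds with respect to $|\mu|$. Dilate by $T_{x_{0},r}(y):=(y-x_{0})/r$ and form the normalized push-forward measures $\mu_{r}:= (T_{x_{0},r})_{*}\mu \,/\, |\mu|(B(x_{0},r))$. Standard weak-$*$ compactness on the space of locally finite measures produces a nonzero tangent measure $\nu$; since at $|\mu_{s}|$-a.e.\ point the density of $|\mu|$ with respect to Lebesgue measure blows up, the mass of $\mu_{r}$ concentrates on sets of vanishing Lebesgue measure, and one can arrange $\nu$ to be singular. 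Two observations are then used: (i) because $\A^{r}[\xi]$ is the principal symbol, the lower-order terms of $\mathcal{A}(D)$ are suppressed by the $r$-scaling, so $\nu$ satisfies $\A^{r}[D]\nu = 0$ in $\mathcal{D}'(\R^{N};\R^{n})$; (ii) the Lebesgue point condition forces $\frac{d\nu}{d|\nu|}\equiv a$.

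The core rigidity step is the following claim: if a nonzero measure $\nu$ on $\R^{N}$ has constant polar equal to $a$, is $\A^{r}[D]$-free, and satisfies $a\notin \bigcup_{\xi\neq 0}\ker \A^{r}[\xi]$, then $\nu$ must be absolutely continuous. Writing $\nu = a\,\sigma$ for a nonnegative scalar Radon measure $\sigma$, on the Fourier side the PDE reads $\A^{r}[\xi]a\,\widehat{\sigma}(\xi)=0$, which by itself is empty information. The real content comes from using the hypothesis quantitatively: the function $\xi\mapsto |\A^{r}[\xi]a|^{2}$ is a strictly positive $2r$-homogeneous polynomial on the unit sphere, hence bounded below. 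This lets one build a zero-order Fourier multiplier $m(\xi)\sim (\A^{r}[\xi]a)^{*}/|\A^{r}[\xi]a|^{2}$ such that, composing with the constraint, one recovers $\widehat{\sigma}$ itself up to a smoothing error, and via singular-integral estimates on spaces of measures this forces enough regularity on $\sigma$ to contradict singularity.

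The main obstacle is precisely this rigidity step: the naive Fourier computation yields only a pointwise equation already made trivial by the hypothesis, so the substance lies in upgrading \emph{non-vanishing of $\A^{r}[\xi]a$ on the sphere} into a genuine regularity gain for $\sigma$. Doing this rigorously requires a delicate interplay of homogeneous multiplier theory, atomic decomposition of vector-valued measures, and a localization/cut-off argument to pass from $\Omega$ to all of $\R^{N}$ (and back). Once this technical heart is in place, everything combines as expected: a generic tangent measure $\nu$ at a point of $\supp|\mu_{s}|$ is a nonzero singular $\A^{r}[D]$-free measure with constant polar $a$, and the rigidity step would force it to be absolutely continuous whenever $a$ lies outside the wave cone — a contradiction. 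Hence $a=\frac{d\mu}{d|\mu|}(x_{0})\in \bigcup_{\xi\neq 0}\ker\A^{r}[\xi]$ for $|\mu_{s}|$-a.e.\ $x_{0}$, which is exactly \eqref{wavecone}.
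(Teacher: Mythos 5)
Note first that Theorem~\ref{polar} is quoted in this paper from~\cite{DR} without proof; the paper's own contribution is the martingale analogue (Theorem~\ref{mainres}), whose proof uses a decomposition of the tree into $\eps$-flat and $\eps$-convex atoms and a Doob-maximal-function argument --- a route quite different from anything Fourier-analytic or blow-up-based. So the right comparison for your sketch is with the original De Philippis--Rindler argument, which indeed combines a rescaling step with a Fourier-multiplier rigidity estimate. Your overall skeleton therefore has the correct flavour, but it contains a gap at the single most delicate point of the actual proof.

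The gap is the sentence ``one can arrange $\nu$ to be singular.'' This is false in general and is precisely the obstruction that De Philippis and Rindler had to overcome. By an example of Preiss, there exist purely singular measures on $\R$ whose tangent measures at almost every point consist entirely of multiples of Lebesgue measure; the blow-up of a singular measure thus can be absolutely continuous even though the density $|\mu|(B_r(x))/r^N$ diverges. Consequently, the strategy ``blow up to a singular tangent measure with constant polar, then apply rigidity to the limit'' does not close. The actual argument avoids this by proving a \emph{quantitative} rigidity statement along the rescaled \emph{sequence} $\mu_r$ (roughly: if $\A^r(D)\mu_j\to 0$ in $W^{-r,q}$ and the polars stay in a compact set away from the wave cone, then the $\mu_j$ are equi-integrable, so any weak-$*$ limit is absolutely continuous), and only then passes to the limit. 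Your second step --- the zero-order multiplier built from the pseudoinverse of $\A^r[\xi]a$ and the resulting $L^{N/(N-1)}$-type gain --- is the right ingredient for that quantitative lemma, and you flag correctly that making it rigorous is technical; but it must be applied \emph{before} the limit is taken, to the approximating sequence and not to a putative singular tangent measure, otherwise the argument never gets off the ground.
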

In the equation \eqref{wavecone} the symbol $\frac{d\mu}{d|\mu|}(x)$ denotes the polar decomposition of $\mu$ at $x$, i.e. the value of the Radon-Nikodym derivative of $\mu$ with respect to its total variation at $x$. By~$\mu_s$ we denote the singular part of~$\mu$. The set on the right hand side of \eqref{wavecone} is called the wave cone of $\AB$. Defining a similar object associated with a filtration is perhaps the most rewarding outcome of our considerations. 
We will also derive a discrete variant of Alberti's famous rank-one theorem (see \cite{A}). 

We obtain our results on a specific metric space corresponding to the setting of $q$-regular martingales considered by Janson in \cite{Janson} for the purpose of modeling real Hardy spaces. His martingale model can be realized  on the probability space $\Omega= (\T, \sigma(\bigcup_{n=0}^{\infty} \FF_{n}), \mu)$, which we describe below.

Let $q\geq 3$ be a fixed integer. The set $\T$ will consist of all infinite paths of the infinite $q$-regular tree $\TT$ that begin from its root. To be more precise, the oriented graph $\TT = (V,E)$ is defined by the following properties:
\begin{enumerate}
\item it has a distinguished vertex called the root; \\
\item it is an infinite directed and connected graph without cycles;\\
\item each vertex has~$q$ outgoing edges; \\
\item each vertex except the root has one incoming edge, the root has no incoming edges.
\end{enumerate} 
If $v\to w$ in~$\TT$, then we refer to $w$ as a son of $v$ and write $w^{\uparrow} = v$. It will be convenient to enumerate the sons of a vertex~$w$ with the numbers~$1,2,\ldots, q$ and fix such an enumeration. 

By a path we mean an infinite directed sequence of vertices starting from the root, each succeeding being a son of the preceding. If~$x \in \T$ is a path, then we denote the~$n$th element of the corresponding sequence by~$x(n)$. 
The set of atoms (of $n$th generation) $\AB \FF_{n}$ consists of $q^{n}$ sets of the form
\[
\omega_{v} = \{x\in \T: x(n) = v \} \ \text{for} \ v \ \text{such that} \ d_{\TT}(root, v) = n.
\]
In other words, for a vertex $v$ whose standard graph distance to the root is~$n$, the set $\omega_{v}$ consists of infinite paths that pass through~$v$. The collection of the sets~$\AB\FF_n$ forms a partition of~$\T$ and we put $\FF_{n}$ to be the set algebra generated by~$\AB \FF_{n}$. Further, we tacitly transfer the tree structure from $V$ to the set of all atoms of all generations, writing $\omega_{v}^{\uparrow} = \omega_{w}$ if $v^{\uparrow} = w$, etc. and we will not make any distinction between vertices of a tree and atoms. Finally, the measure $\mu$ is simply the uniform measure on $\T$, i.e. $\mu(\omega) = q^{-n}$ for $\omega \in \AB \FF_{n}$.

The space~$\T$ played an important role in modeling so-called Bourgain--Brezis inequalities, see~\cite{ASW} and~\cite{St22}. See the first of these papers for more information about~$\T$.

A sequence of~$\R^l$-valued functions~$\{F_n\}_n$ is a martingale provided for any~$n\in \mathbb{N}\cup \{0\}$ the function~$F_n$ is~$\FF_n$-measurable and
\begin{equation}\label{MartingaleProperty}
\E(F_{n+1}\mid \FF_n) = F_n.
\end{equation}
Each finite~$\R^l$-valued measure~$\nu \in \M(\T, \R^{l})$ on~$\T$ generates a martingale~$\{\nu_n\}_n$ by the formula 
\[
\nu_{n}(x) = q^{n} \nu(\omega),\qquad x\in \omega \in \AB\FF_n.
\] 
\begin{definition}\label{DOmega}
Let us denote by $\Diff(\nu)$ the set of all matrices $D_{\omega} \in M_{q\times l}(\R)$ of the form
\begin{equation}\label{MartingaleDifferenceMatrix}
D_{\omega} = \begin{bmatrix}
	\ | & \dots & |  \\
	d_{1} & \dots & d_{q} \\
	| & \dots & |
\end{bmatrix},
\end{equation}
where $\omega \in \AB \FF_{n}$ and 
\[
d_{i} = \nu_{n+1}(\omega_{i}) - \nu_n(\omega),
\]
and $\omega_{1}, \dots, \omega_{q} \in \AB \FF_{n+1}$ are all sons of $\omega$.
\end{definition}
\begin{definition}
Let~$W\subset M_{q\times l}(\R)$ be a linear subspace. We denote the space~$\WW$ by the rule
\begin{equation}
\WW = \{\nu\in \M(\T,\R^l)\colon \Diff(\nu) \subset W\}.
\end{equation}
\end{definition}
Note that by the martingale property~\eqref{MartingaleProperty} each row of the matrix~\eqref{MartingaleDifferenceMatrix} is a vector with zero mean. Therefore, we may restrict our attention to subspaces of the form~$W\subset \R_0^q\otimes \R^l$ only. In~\cite{ASW}, the space~$\WW$ was called a martingale Sobolev space. The terminology 'martingale~$\mathrm{BV}$-type space' seems more appropriate. The space~$W$ is an analog of the differential constraint~\eqref{DifferentialConstraint} in the sense that the spaces~$\WW$ and~$\{\mu \in \M(\R^N,\R^m)\mid \mathcal{A}(D)\mu = 0\}$ have many similarities (say, they behave similarly under the action of Riesz potentials, see~\cite{ASW} and~\cite{St22}; another confirmation of this principle comes from dimensional estimates for corresponding measures, see~\cite{ASW},~\cite{Sto2023}, and~\cite{A2021}).
\begin{definition}
Let $v\in \R^{l}$ and $A\subset M_{q\times l}(\R)$ be a subset of real $q\times l$ matrices. We define the rank-one angle between $v$ and $A$ as
\[
\gamma(v,A):=\inf\{|\angle(v\otimes w, m)|_{HS}: w\in \R^{q}_0, w\neq 0, m\in A \},
\]
where
\[
|\angle(A_{1}, A_{2})|_{HS} = \arccos \bigg{(}\frac{|tr(A_{1}^{t} A_{2})|}{\lVert A_{1}\rVert_{HS}\lVert A_{2}\rVert_{HS}}\bigg{)},\quad A_1,A_2\in M_{q\times l}.
\]
is the measure of the angle between matrices, computed with respect to the Hilbert--Schmidt norm~$\|\cdot\|_{HS}$. Here, $v\otimes w = v\cdot w^{t}$.
\end{definition}

\begin{definition}
We define the martingale wave cone of $W$ as the set
\begin{equation*}
\Lambda(W) = \{v\in \R^l\colon \exists w\in \R^q_0\setminus \{0\} \ \text{such that}\ v\otimes w \in W\}.
\end{equation*}
In other words,
\[
\Lambda(W) = \{v\in \R^{l}: \gamma(v, W) = 0\}.
\]

\end{definition} 
Let us decompose $\nu$ into absolutely continuous and singular part (with respect to the uniform measure on $\T$)
\[
\nu = \nu_{abs}+\nu_{s}.
\]
Our main result is the following:
\begin{thm}\label{mainres}
Let $\nu \in \WW$ be a finite $\R^{l}$-valued measure. Then
\[
\frac{d\nu}{d|\nu|}(x) \in \Lambda(W) \quad \text{for} \ |\nu_{s}|\text{-a.e.} \ x.
\]
\end{thm}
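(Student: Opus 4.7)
The plan is to adapt the De~Philippis--Rindler blow-up strategy to the tree setting, exploiting the canonical self-similarity $\omega_n(x_0)\simeq\T$. I fix a point $x_0\in\T$ satisfying the Besicovitch-type differentiation properties holding $|\nu_s|$-a.e.: the polar $v_0:=\frac{d\nu}{d|\nu|}(x_0)$ exists, $x_0$ is an $L^1(|\nu|)$-Lebesgue point of $\phi:=\frac{d\nu}{d|\nu|}$, and $\limsup_n q^n|\nu|(\omega_n(x_0))=+\infty$. I assume, towards a contradiction, that $v_0\notin\Lambda(W)$. Let $\sigma_n\colon\omega_n(x_0)\to\T$ be the canonical tree isomorphism, and set
\[
\nu^{(n)}:=|\nu|(\omega_n(x_0))^{-1}(\sigma_n)_*\bigl(\nu|_{\omega_n(x_0)}\bigr).
\]
Since the constraint $D_\omega\in W$ is local, linear, and scale-covariant, $\nu^{(n)}\in\WW$ and $|\nu^{(n)}|(\T)=1$. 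By Banach--Alaoglu I extract a weak-$*$ subsequential limit $\nu^{(n_k)}\to\nu^\infty$ with $|\nu^{(n_k)}|\to\rho$, where $\rho$ is a probability measure on the compact space $\T$. The Lebesgue point condition at $x_0$ yields $\int_\T|\phi^{(n)}-v_0|\,d|\nu^{(n)}|\to 0$, and a direct estimate on $\nu^{(n_k)}(\psi)$ for continuous $\psi$ identifies $\nu^\infty=v_0\rho$, a non-zero rank-one measure.

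Each $D_\omega$ depends on $\nu$ only through evaluations on finitely many clopen atoms, hence $\WW$ is weak-$*$ closed and $\nu^\infty\in\WW$. Therefore, for every atom $\omega$,
\[
D^{\nu^\infty}_\omega = v_0\otimes D^\rho_\omega \in W.
\]
If some $D^\rho_\omega\neq 0$, then $v_0\otimes D^\rho_\omega$ is a non-zero rank-one element of $W$, so by the very definition of the martingale wave cone $v_0\in\Lambda(W)$, contradicting our standing assumption. The only alternative is $D^\rho_\omega=0$ for every $\omega$; this forces $\rho=\mu$ (a positive probability measure on $\T$ is uniquely determined by its martingale differences and its total mass) and hence $\nu^\infty=v_0\mu$, a degenerate absolutely continuous limit from which no wave-cone information can be read off.

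Excluding this degenerate case at $|\nu_s|$-a.e.\ $x_0$ is the main obstacle. I would complement the blow-up with a pointwise estimate coming directly from $v_0\notin\Lambda(W)$: this assumption says that $w\mapsto v_0\otimes w$ sends $\R_0^q\setminus\{0\}$ off $W$, so by finite-dimensional coercivity $\mathrm{dist}(v_0\otimes w, W)\gtrsim|w|$. Decomposing $\nu=v_0|\nu|+\eta$ with $\eta(A):=\int_A(\phi-v_0)\,d|\nu|$, the relation $D^\nu_\omega=v_0\otimes D^{|\nu|}_\omega+D^\eta_\omega\in W$ implies
\[
|D^{|\nu|}_{\omega_n(x_0)}|\;\lesssim\;|D^{\eta}_{\omega_n(x_0)}|\;\lesssim\;q^{n+1}|\eta|(\omega_n(x_0))\;=\;o\bigl(q^{n+1}|\nu|(\omega_n(x_0))\bigr)
\]
at every Lebesgue point $x_0$. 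The resulting vanishing of the relative scalar martingale differences of $|\nu|$ along the path through $x_0$ is the ingredient that should rule out a purely uniform tangent: I expect either a Preiss-type tangent-measure theorem on $\T$, or a stopping-time selection of blow-up scales $n_k$ along which $|\nu|(\omega_{n_k+1}(x_0))/|\nu|(\omega_{n_k}(x_0))$ deviates maximally from $1/q$, to force $\rho\neq\mu$ at $|\nu_s|$-a.e.\ $x_0$. Bridging this pointwise estimate and the non-triviality of the blow-up tangent is the step where I expect the main work.
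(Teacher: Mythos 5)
Your blow-up strategy is a genuinely different route from the paper, which does not use tangent measures at all. The paper decomposes the leaf set carrying $\nu_s$ into \emph{big leaves} (where the relative martingale differences stay bounded below along a subsequence, cf.\ \eqref{bigleaf}) and \emph{small leaves} (where they vanish), and treats the two cases with two separate algebraic lemmas. What you prove correctly overlaps with the paper's Step~1 in a disguised form: your coercivity estimate
$\lVert D^{|\nu|}_{\omega_n(x_0)}\rVert \lesssim_{\eta} \lVert D^{\eta}_{\omega_n(x_0)}\rVert = o\bigl(q^{n}|\nu|(\omega_n(x_0))\bigr)$
at a Lebesgue point of $\phi$ where $\gamma(v_0,W)\geq\eta>0$ is, up to combining it with Lemma~\ref{lem1}, exactly the statement that any such point must be a \emph{small leaf}. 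So your approach and the paper's agree on what the hard case is.

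However, the way you propose to close the argument has a genuine gap, and you flag it yourself. The degenerate tangent $\rho=\mu$ cannot be excluded by soft means: a purely singular measure on $\T$ can have $|\nu|(\omega_{n+1}(x_0))/|\nu|(\omega_n(x_0))\to 1/q$ along the whole path through $|\nu_s|$-a.e.\ $x_0$ (e.g.\ when the density grows slowly, like a polynomial in $n$), and then every subsequential blow-up gives $\rho=\mu$ and $\nu^\infty=v_0\mu$, an absolutely continuous measure that trivially sits in $\mathfrak W$ and carries no wave-cone information. Invoking ``a Preiss-type tangent-measure theorem on $\T$'' or ``a stopping-time selection of scales'' is not a proof: even in the Euclidean De~Philippis--Rindler argument this is precisely where the substantial work (rigidity for PDE-constrained measures with Lebesgue tangents, via a careful use of Preiss's hierarchy of tangent measures) enters, and none of that machinery is available off the shelf on $\T$. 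Moreover, your pointwise estimate actively works against you here: it shows that at the bad points the relative differences of $|\nu|$ \emph{do} vanish, so the degenerate tangent is the generic outcome rather than an avoidable anomaly.

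The paper closes the small-leaf case by a direct martingale argument that bypasses tangent measures entirely. Lemma~\ref{lem2} shows that if the rank-one angle $\gamma(a,\{D_\omega\})\geq\eta$ and the relative martingale differences are small (of size $\delta$), then the scalar process $\lVert\nu_n\rVert^p$ is a submartingale for some $p=p(\delta,\eta)<1$; this is a second-order Taylor estimate that exploits the strict smoothness of the Euclidean norm in the direction transverse to $a$. Restricting the martingale to the subtree $\TT^{(\epsilon)}_j$ of small leaves and applying Doob's maximal inequality in $L^{1/p}$ then places the stopped measure $\theta$ in martingale $H^1$, forcing it to be absolutely continuous and contradicting $|\nu_s|(L(\epsilon,\TT^{(\epsilon)}_j))>0$. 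This is the step that replaces the tangent-measure rigidity you are missing, and without an analog of it (or a genuine rigidity theorem for $\mathfrak W$-constrained measures with uniform tangents) your proposal does not yet yield the theorem.
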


\section{Decomposition into flat and convex atoms}
The proof of Theorem ~\ref{mainres} relies on a combination of the ideas from \cite{Janson} and \cite{ASW}. The crucial tool is the decomposition of $\T$ into parts corresponding to the so-called $\epsilon$-convex and $\epsilon$-flat atoms introduced in \cite{ASW}.
\begin{definition}
For a given $\epsilon \in (0,1)$, an atom $\omega\in \AB\FF_{n}$ is called $\epsilon$-convex if 
\[
\E(\lVert \nu_{n+1} \rVert - \lVert \nu_{n} \rVert) 1_{\omega} \geq \epsilon \E\lVert\nu_{n}\rVert1_{\omega}.
\]
If the reverse inequality holds, then $\omega$ is an~$\epsilon$-flat.
\end{definition}
Here and in what follows, we use the standard Euclidean norm on~$\R^l$. One may see that~$0$-flat atoms correspond to the case where the matrix~$D_w$ has rank one, because in this case the triangle inequality~$\E\|\nu_{n+1}\|1_{\omega} \geq \|\nu_n(\omega)\|$ turns into equality.

Let us denote by $\TT^{\epsilon}$ the subgraph of $\TT$ generated by the vertices corresponding to $\epsilon$-flat atoms. One can represent $\TT^{\epsilon} = \cup \TT^{\epsilon}_{i}$, where each $\TT^{\epsilon}_{i}$ is a maximal by inclusion connected subgraph (a tree). It turns out that the singular part of $\nu$ is  carried by infinite paths of the trees $\TT^{\epsilon}_{i}$.

\begin{definition}
	We call a point $x \in \T$  an $\epsilon$-leaf if there exists $n_{0}\in \N$  and a sequence of $\epsilon$-flat atoms~$\{\omega_n\}_n$, $\omega_{n} \in \AB \FF_{n}$ such that $x \in \omega_{n}$ for all $n\geq n_{0}$. Let us denote  by $L(\epsilon)$ the set of all $\epsilon$-leaves and put
	\[
	L := \bigcap_{\epsilon > 0} L(\epsilon).
	\]
	For a subgraph $G \subset \T$, we denote $L(\epsilon, G)$ the set of all infinite paths of $G$ that are restrictions of $\epsilon$-leaves to $G$.
\end{definition}

By Lemma 3.3 and Corollary 3.4 from \cite{ASW}, we have~$|\nu_{s}|(\T \setminus L(\epsilon)) = 0$ for all~$\eps > 0$
and so 
\begin{equation}\label{leavessupp}
	|\nu_{s}|(\T\setminus L) = 0.
\end{equation}

We will need yet another classification of leaves.
\begin{definition}\label{bigleaves}
We call $x\in L$ a big leaf if there exists $\beta >0$ and a sequence of atoms
\[
\omega_{n_{1}} \supset \omega_{n_{2}} \supset \dots \supset \{x\}, \quad \ \ n_{1} < n_{2} < \dots
\]
such that $\omega_{n_{k}} \in \AB \FF_{n_{k}}$ and 
\begin{equation}\label{bigleaf}
	\frac{1}{q} \sum_{j=1}^{q} \lVert d_{j}^{(n_{k})} \rVert \geq \beta \lVert \nu_{n_{k}}(\omega_{n_{k}}) \rVert,
\end{equation}
where 
\[
D_{\omega_{k}} = \begin{bmatrix}
	\ | & \dots & |  \\
	d_{1}^{(n_{k})} & \dots & d_{q}^{(n_{k})} \\
	| & \dots & |
\end{bmatrix}.
\]
Otherwise, we call $x\in L$ a small leaf. The sets of small and big leaves will be denoted by $L_{s}$ and $L_{b}$, respectively.
\end{definition}


\section{Proof of the main theorem}
We need two algebraic lemmas. The first one quantifies the `flattening effect' (c.f. Lemma 2.1. in \cite{ASW}). The notation~$\pi_a x$ means projection of~$x\in \R^l$ onto the line spanned by~$a\in \R^l$,~$\pi_{a^\perp} x$ denotes the projection onto the orthogonal complement of~$a$.
\begin{lem}\label{lem1}
Let $\omega \in \AB \FF_{n}$ be an $\epsilon$-flat atom with~$\eps < 1$, i.e.
\[
\frac{1}{q} \sum_{j=1}^{q} \lVert a+d_{j} \rVert - \lVert a \rVert \leq \epsilon \lVert a \rVert
\]
for 
\[
D_{\omega} = \begin{bmatrix}
	\ | & \dots & | \\
	d_{1} & \dots & d_{q} \\
	| & \dots & |
\end{bmatrix}, \quad a=\nu_{n}(\omega).
\]
Then, for all $j=1, 2,\ldots, q$ we have $\lVert \pi_{a^{\perp}}d_{j} \rVert \leq 2q\sqrt{\epsilon} \lVert a \rVert$.
\end{lem}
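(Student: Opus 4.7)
The plan is an orthogonal decomposition argument driven by the Pythagorean theorem together with convexity. Assume $a\neq 0$ (otherwise the hypothesis forces every $d_j=0$ and the conclusion is trivial). Set $\hat a := a/\|a\|$ and decompose
\[
d_j = t_j\hat a + e_j,\qquad t_j := \langle d_j,\hat a\rangle,\qquad e_j := \pi_{a^{\perp}}d_j.
\]
The martingale identity $\sum_j d_j = 0$ forces $\sum_j t_j = 0$. By the Pythagorean theorem $\|a+d_j\|^2 = (\|a\|+t_j)^2 + \|e_j\|^2$, and factoring the difference of squares gives the exact identity
\[
\|a+d_j\| - \bigl|\|a\|+t_j\bigr| \;=\; \frac{\|e_j\|^2}{\|a+d_j\|+|\|a\|+t_j|}.
\]

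Summing over $j$, the trivial inequality $|\|a\|+t_j|\geq \|a\|+t_j$ together with $\sum_j t_j=0$ yields $\sum_j\bigl(|\|a\|+t_j|-\|a\|\bigr)\geq 0$; combining this with the $\epsilon$-flatness hypothesis $\sum_j(\|a+d_j\|-\|a\|)\leq q\epsilon\|a\|$ rearranges to
\[
\sum_{j=1}^{q}\frac{\|e_j\|^2}{\|a+d_j\|+|\|a\|+t_j|}\;\leq\; q\epsilon\|a\|.
\]
To turn this sum-level estimate into a pointwise bound on each $\|e_j\|$, I control the denominator index by index. Using only $\|a+d_i\|\geq 0$, hence $\|a+d_i\|-\|a\|\geq -\|a\|$ for every $i\neq j$, and plugging this back into the flatness inequality isolates a single index:
\[
\|a+d_j\|-\|a\|\;\leq\; q\epsilon\|a\|+(q-1)\|a\|,
\]
so $\|a+d_j\|\leq q(1+\epsilon)\|a\|$. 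Since $|\|a\|+t_j|=\|\pi_a(a+d_j)\|\leq\|a+d_j\|$, the denominator is at most $2q(1+\epsilon)\|a\|$, and therefore
\[
\|e_j\|^2 \;\leq\; 2q^2\epsilon(1+\epsilon)\|a\|^2 \;<\; 4q^2\epsilon\|a\|^2
\]
for $\epsilon<1$; taking square roots delivers the claim.

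The main obstacle is precisely the denominator control: the $\epsilon$-flat hypothesis only provides an average over the $q$ sons, whereas the conclusion requires a bound for each individual $j$. The factor $q$ in the final inequality is the price paid for isolating a single index from an average, using just the trivial non-negativity $\|a+d_i\|\geq 0$; the restriction $\epsilon<1$ merely serves to absorb $\sqrt{2(1+\epsilon)}$ into the constant $2$. Everything else is routine orthogonal algebra.
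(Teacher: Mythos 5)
Your proof is correct and takes essentially the same route as the paper's. Writing $d_j = t_j\hat a + e_j$ and invoking the Pythagorean factorization $\|a+d_j\|-|\|a\|+t_j| = \|e_j\|^2/(\|a+d_j\|+|\|a\|+t_j|)$ is notationally different from, but identical in substance to, the paper's step $\|a+d_j\|-\|a+\pi_a d_j\|=\|\pi_{a^\perp}d_j\|^2/(\|a+d_j\|+\|a+\pi_a d_j\|)$, since $|\|a\|+t_j|=\|a+\pi_a d_j\|$; likewise your observation $\sum_j(|\|a\|+t_j|-\|a\|)\geq\sum_j t_j=0$ is the paper's convexity bound $\frac1q\sum_j\|a+\pi_a d_j\|\geq\|a\|$, and your isolation of a single index to get $\|a+d_j\|\leq q(1+\epsilon)\|a\|$ is the paper's $\|a+d_j\|\leq\sum_k\|a+d_k\|\leq(1+\epsilon)q\|a\|$.
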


\begin{proof}
From the triangle inequality and~$\sum_j d_j=0$, we have
\begin{equation}
\frac{1}{q} \sum_{j=1}^{q} \lVert a+\pi_a d_{j} \rVert \geq \lVert a \rVert.
\end{equation}
The above and the definition of $\epsilon$-flat atom imply that
\begin{multline}\label{orthogonal}
\epsilon \lVert a \rVert \geq \frac{1}{q} \Big(\sum_{j=1}^{q} \lVert a+d_{j} \rVert - \lVert a + \pi_{a} d_{j} \rVert\Big) = \frac{1}{q} \sum_{j=1}^{q} \frac{\lVert \pi_{a^{\perp}}(d_{j})\rVert^{2}}{\lVert a+d_{j} \rVert + \lVert a + \pi_{a} d_{j} \rVert} \\ \geq \frac{1}{2q} \sum_{j=1}^{q} \frac{\lVert \pi_{a^{\perp}}(d_{j})\rVert^{2}}{\lVert a+d_{j} \rVert} \geq \frac{1}{4q^{2}} \sum_{j=1}^{q} \frac{\lVert \pi_{a^{\perp}}(d_{j})\rVert^{2}}{\lVert a \rVert}.
\end{multline}
The latter inequality follows from 
\begin{equation}
\lVert a+d_{j} \rVert \leq \sum_{k=1}^{q} \lVert a+d_{k} \rVert \leq (1+\epsilon)q\lVert a \rVert \leq 2q\lVert a \rVert.
\end{equation}
Thus,~\eqref{orthogonal} yields $\lVert \pi_{a^{\perp}} d_{j} \rVert^{2} \leq 4q^{2} \epsilon \lVert a \rVert^{2}$  for all $j=1, 2 \ldots, q$.
\end{proof}

The second lemma uses the smoothness of the Euclidean norm (c.f. Lemma 10 in \cite{Janson}).
\begin{lem}\label{lem2} Suppose that $\omega \in \AB \FF_{n}$ and
\[
D_{\omega} = \begin{bmatrix}
	\ | & \dots & |  \\
	d_{1} & \dots & d_{q} \\
	| & \dots & |
\end{bmatrix}, \quad a=\nu_{n}(\omega).
\]
Let us assume that~$\gamma(a, \{D_{\omega}\}) \geq \eta >0$
and~$\sum_{j=1}^{q} \lVert d_{j} \rVert \leq \delta \lVert a \rVert$
for some parameters $\eta, \delta>0$. Then, for sufficiently small $\delta$, there exists $p_{0} = p_{0}(\delta, \eta)$ such that
\begin{equation}\label{eq14}
	\lVert a \rVert^{p} \leq \frac{1}{q} \sum_{j=1}^{q} \lVert a+d_{j} \rVert^{p}
\end{equation}
for any $p$ satisfying  $p_{0}<p<1$.
\end{lem}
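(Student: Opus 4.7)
The plan is to Taylor-expand $x\mapsto\|x\|^p$ to second order around $a$ and use the angle hypothesis to produce just enough positivity in the quadratic form to compensate for the concavity of $\|\cdot\|^p$ when $p<1$. The first step is to unpack the hypothesis. Setting $e:=a/\|a\|$ and writing $d_j=\alpha_j e+d_j^{\perp}$ with $d_j^{\perp}\perp a$, the martingale identity $\sum_j d_j=0$ places $\alpha:=(\alpha_1,\ldots,\alpha_q)$ in $\R^q_0$. A direct Hilbert--Schmidt computation gives $\mathrm{tr}((a\otimes w)^T D_\omega)=\|a\|\sum_j w_j \alpha_j$ and $\|a\otimes w\|_{HS}=\|a\|\|w\|$; maximising $|\cos\angle(a\otimes w,D_\omega)|$ over $w\in\R^q_0\setminus\{0\}$ (noting $\alpha\in\R^q_0$) yields $\|\alpha\|/\|D_\omega\|_{HS}$, so $\gamma(a,\{D_\omega\})\ge\eta$ is equivalent to
\[
\sum_j\alpha_j^2\le\cos^2(\eta)\sum_j\|d_j\|^2,\qquad\text{or equivalently}\qquad \sum_j\|d_j^\perp\|^2\ge\sin^2(\eta)\sum_j\|d_j\|^2.
\]

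Next I would set $t_j:=\|d_j\|/\|a\|$, noting $\max_j t_j\le\delta$, and expand the identity $\|a+d_j\|^p=\|a\|^p\bigl((1+\alpha_j/\|a\|)^2+\|d_j^\perp\|^2/\|a\|^2\bigr)^{p/2}$ to second order in $(\alpha_j,\|d_j^\perp\|)/\|a\|$:
\[
\|a+d_j\|^p=\|a\|^p\Bigl(1+p\frac{\alpha_j}{\|a\|}+\frac{p(p-1)}{2}\frac{\alpha_j^2}{\|a\|^2}+\frac{p}{2}\frac{\|d_j^\perp\|^2}{\|a\|^2}+R_j\Bigr),
\]
with $|R_j|\le C t_j^3$ and $C$ uniform for $p$ in any compact subinterval of $(0,1)$. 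Averaging over $j$ and using $\sum_j\alpha_j=e\cdot\sum_j d_j=0$ to cancel the linear term gives
\[
\frac{1}{q}\sum_j\|a+d_j\|^p-\|a\|^p=\frac{p\|a\|^{p-2}}{2q}\Bigl(\sum_j\|d_j^\perp\|^2-(1-p)\sum_j\alpha_j^2\Bigr)+\|a\|^p\, O\Bigl(\tfrac{1}{q}\sum_j t_j^3\Bigr).
\]

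Finally I would substitute $\sum_j\alpha_j^2=\sum_j\|d_j\|^2-\sum_j\|d_j^\perp\|^2$ and apply the coercivity bound $\sum_j\|d_j^\perp\|^2\ge\sin^2(\eta)\sum_j\|d_j\|^2$ established above to bound the main bracket from below by $\bigl[(2-p)\sin^2(\eta)-(1-p)\bigr]\sum_j\|d_j\|^2$. This coefficient equals $\sin^2(\eta)>0$ at $p=1$ and hence exceeds $\tfrac12\sin^2(\eta)$ on some interval $(p_0^*(\eta),1)$. The cubic remainder is controlled by $\sum_j t_j^3\le\delta\sum_j t_j^2=\delta\|a\|^{-2}\sum_j\|d_j\|^2$, so choosing $\delta$ small enough in terms of $\eta$ lets the positive quadratic term dominate, yielding \eqref{eq14} for $p\in(p_0,1)$ with $p_0=p_0(\delta,\eta)\to p_0^*(\eta)$ as $\delta\to 0$. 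The only delicate point is that for $p<1$ the function $\|\cdot\|^p$ is not convex, so the inequality is not a Jensen consequence; it is precisely the rank-one angle hypothesis that forces a large orthogonal contribution $\sum_j\|d_j^\perp\|^2\gtrsim\sum_j\|d_j\|^2$, which makes the second-order form coercive close to $p=1$, and absorbing the cubic remainder is then just bookkeeping in $\delta$.
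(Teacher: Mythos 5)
Your proof is correct and follows essentially the same route as the paper: translate the rank-one-angle hypothesis into the quadratic inequality $\sum_j|\langle a,d_j\rangle|^2\le\cos^2\eta\sum_j\|d_j\|^2$, Taylor-expand $\|a+d_j\|^p$ to second order (the linear term dies via $\sum_jd_j=0$), observe that the resulting quadratic coefficient tends to $\tfrac12\sin^2\eta>0$ as $p\to1$, and absorb the cubic remainder using $\sum_j\|d_j\|^3\le\delta\sum_j\|d_j\|^2$. The only difference is cosmetic: you carry an explicit parallel/perpendicular decomposition $d_j=\alpha_je+d_j^\perp$, whereas the paper works directly with $\langle a,d_j\rangle$ and $\|d_j\|$; since $\alpha_j^2+\|d_j^\perp\|^2=\|d_j\|^2$, the two coefficients $(2-p)\sin^2\eta-(1-p)$ and $1+(p-2)\cos^2\eta$ are identical.
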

\begin{proof}
Without loss of generality, we may assume~$\|a\|=1$. By duality,
\begin{multline}
	\frac{\sqrt{\sum_{j=1}^{q} |\langle a, d_{j} \rangle|^{2}}}{\sqrt{\sum_{j=1}^{q}\lVert d_{j} \rVert^{2}}} = \sup_{\lVert \{\epsilon_{j}\} \rVert = 1} \frac{\sum_{j=1}^{q} \langle \epsilon_{j} a, d_{j} \rangle}{\sqrt{\sum_{j=1}^{q}\lVert d_{j} \rVert^{2}}} \\ 
	= \sup_{\lVert \{\epsilon_{j}\} \rVert = 1} \cos|\angle ( a \otimes \{\epsilon_{j}\}, D_{\omega})|_{HS} \leq \cos \eta,
\end{multline}
which leads to
\begin{equation}\label{eq16}
\sum_{j=1}^{q} |\langle a, d_{j} \rangle|^{2} \leq \cos^2 \eta  \sum_{j=1}^{q}\lVert d_{j} \rVert^{2}.
\end{equation}
Using the representation
\begin{equation}
\|a+d_j\|^p = \Big(1+ 2\scalprod{a}{d_j} + \|d_j\|^2\Big)^{p/2},
\end{equation}
and treating the~$d_j$ as small parameters, we apply Taylor's formula to the right hand side of~\eqref{eq14}:
	\begin{multline}\label{taylor}
		\sum_{j=1}^{q} \lVert a+d_{j} \rVert^{p} =  \sum_{j=1}^{q}\bigg{(}1+\langle a, d_{j} \rangle + \frac{p}{2} \lVert d_{j} \rVert^{2}+ \frac{p(p-2)}{2}\scalprod{a}{d_j}^2\bigg{)} \\ +O\bigg{(}\sum_{j=1}^{q} \lVert d_{j} \rVert^{3}\bigg{)} =
		 q+\frac{p}{2}\sum\limits_{j=1}^q \lVert d_{j} \rVert^{2} + \frac{p(p-2)}{2}\sum\limits_{j=1}^q\scalprod{a}{d_j}^2 + O\bigg{(}\sum_{j=1}^{q} \lVert d_{j} \rVert^{3}\bigg{)}.
	\end{multline}
Using~\eqref{eq16}, we bound the right hand side of \eqref{taylor} from below by
\begin{equation}
q + \frac{p}{2} \sum_{j=1}^{q} \lVert d_{j} \rVert^{2}+ \cos^2\eta\frac{p(p-2)}{2}\sum_{j=1}^{q}\|d_j\|^2+O\bigg{(}\sum_{j=1}^{q} \lVert d_{j} \rVert^{3}\bigg{)}.
\end{equation}
Since $|\cos \eta| < 1$, the last expression is at least $q$ provided that $\delta$ is sufficiently small and $p$ is sufficiently close to one. This justifies the desired inequality.
\end{proof}

\begin{proof}[Proof of Theorem \ref{mainres}]
	By the Besicovitch--Lebesgue differentiation theorem\footnote{We are applying a differentiation theorem on a special metric space; see clarification at the beginning of Subsection~$4.2$ in~\cite{ASW}.} we have that for $|\nu|$-a.e $x$
\begin{equation}\label{lebdif}
	\lim_{n \to \infty} \angle\Big(\nu_{n}(x),\frac{d\nu}{d|\nu|}(x)\Big) = 0.
\end{equation}
In particular, this is true for $|\nu_{s}|$-a.e. $x\in L$. For the sake of presentation, let us assume that this is true for all points from $L$.
By \eqref{leavessupp}, it suffices to disprove that there exists $\eta>0$ such that
\begin{equation}
	B_{\eta} = \bigg{\{}x\in L: \gamma\bigg{(}\frac{d\nu}{d|\nu|}(x), \Diff(\nu)\bigg{)} > \eta \bigg{\}}
\end{equation}
has positive $|\nu_{s}|$-measure, or equivalently by \eqref{lebdif} to disprove that
\begin{equation}
	\exists\ n_{0}\quad \forall \ n \geq n_{0}\qquad |\nu_{s}|(B_{\eta,n})>0,
\end{equation}
where
\begin{equation*}
	B_{\eta,n} = \bigg{\{}x\in L: \gamma\bigg{(}\nu_{n}(x), \Diff(\nu)\bigg{)}>\frac{\eta}{2}\bigg{\}}.
\end{equation*}
Let us fix $n$ and decompose $B_{\eta,n} = B_{1}\cup B_{2}$ into sets consisting of big and small leaves, respectively.

\textit{Step 1.} $|\nu|(B_{1}) = 0$. Let $x\in B_{1}$ be a big leaf and $\{\omega_{n_{k}}\}, \{D_{\omega_{n_{k}}}\}$ and $\beta$ be as in Definition \ref{bigleaves}. Put $a = \nu_{n_k} (\omega_{n_{k}})$. We will show that $\gamma(a, \Diff(\nu))$ is in fact arbitrarily small for sufficiently large~$k$. Let us assume that $\lVert a \rVert = 1$. In such a case,~\eqref{bigleaf} and the Cauchy--Schwarz inequality yield
\begin{equation*}
\sum_j\|d_j^{(n_k)}\|^2 \geq \frac{\beta^2}{q},
\end{equation*}
and Lemma~\ref{lem1} implies
\begin{equation*}
\sum_j \|\pi_{a^\perp}(d_j^{(n_k)})\|^2 \leq (2q^2\sqrt \eps)^2 = 4q^4\eps.
\end{equation*}

We have
\begin{multline}\label{eq20}
	\sup_{v\in \R^{q}\setminus \{0\}} \frac{tr[(v\otimes a)^{t}D_{\omega_{n_{k}}}]}{\lVert v\otimes a \rVert_{HS}\lVert D_{\omega_{n_{k}}} \rVert_{HS}} = \sup_{v\in \R^{q}\setminus \{0\}} \frac{\sum_{j} v_{j}\langle a, d_{j}^{(n_{k})} \rangle}{\lVert v \rVert \lVert a \rVert \sqrt{\sum_{j} \lVert d_{j}^{(n_{k})} \rVert^{2}}} = \\ \bigg{(} \frac{\sum_{j} |\langle a, d_{j}^{(n_{k})} \rangle|^{2}}{\sum_{j} \lVert d_{j}^{(n_{k})}\rVert^{2}} \bigg{)}^{1/2} = \bigg{(} \frac{\sum_{j} \lVert d_{j}^{(n_{k})} \rVert^{2} - \sum_{j}\lVert \pi_{a^{\perp}} (d_{j}^{(n_{k})}) \rVert^{2}}{\sum_{j} \lVert d_{j}^{(n_{k})}\rVert^{2}} \bigg{)}^{1/2} \\
	\geq \bigg{(} 1 - \frac{4q^5\eps}{\beta^2}\bigg{)}^{1/2}.
\end{multline}
Now it suffices to notice that for sufficiently large $k$, $\omega_{n_{k}}$ is $\epsilon$-flat for arbitrarily small $\epsilon$. Thus, we have $\gamma(a, \Diff(\nu)) < \frac{\eta}{2}$ from~\eqref{eq20}. Consequently,~$\gamma(\frac{d\nu}{d|\nu|}(x), \Diff(\nu)) < \eta$ and $B_{1} = \emptyset$.

\textit{Step 2.} $|\nu|(B_{2}) = 0$. Assume the contrary. Then, there exists $\epsilon>0$ such that $|\nu_{s}|(L(\epsilon) \cap B_{2})>0$. Consider the decomposition $\TT^{(\epsilon)} = \cup_{j} \TT^{(\epsilon)}_{j}$. One can find $j$ such that $L(\epsilon, \TT^{(\epsilon)}_{j})$ has positive $|\nu_{s}|$-measure (by the disjointedness of those sets). Now it is time to use Lemma \ref{lem2}. Assume first that the inequality reverse to \eqref{bigleaf} holds for all $\omega \in \TT_{j}^{(\epsilon)}$ with a suitable small $\delta$ required in this lemma. Then, on the one hand we have the property that
\begin{equation}\label{case2}
	\theta = \nu^{\TT_{j}^{(\epsilon)}}{\mres L(\epsilon, \TT_{j}^{(\epsilon)})} \text{ and } \nu{\mres L(\epsilon, \TT_{j}^{(\epsilon)})} \text{ have the same singular part.}
\end{equation} 
Here~$\nu^{\TT_{j}^{(\epsilon)}}$ denotes the limit measure of the martingale whose evolution is restricted to the tree $\TT_{j}^{(\epsilon)}$ (if we leave the tree, then we stop the martingale).
On the other hand, for $p<1$ given by Lemma \ref{lem2}, the sequence $\lVert \E(\theta | \FF_{n}) \rVert^{p}$ is a positive submartingale, which, by Doob's theorem on the boundedness of the martingale maximal function in~$L_q$ with~$q > 1$, implies that the maximal function of~$\theta$ is summable, and $\theta$ lies in the martingale space $H^{1}(\R^l)$ (for the details see p. 148 in \cite{Janson}).  Thus, $\theta$ is absolutely continuous.
	
	If the inequality \eqref{bigleaf} is not satisfied for all $\omega \in \TT_{j}^{(\epsilon)}$, then we use the fact that for each infinite path it must be true for atoms that are sufficiently far from the root, i.e.
\begin{equation}
	\forall{x \in L(\epsilon, \TT_{j}^{(\epsilon)})} \ \exists N 
	\ \forall n\geq N \text{ the inequality }\eqref{bigleaf} \text{ holds for } \omega\in\AB\FF_{n} \text{ if } x\in \omega.
\end{equation}
From this we can cover $L(\epsilon,\TT_{j}^{(\epsilon)})$ by a countable union of disjoint sets of the form $L(\epsilon, \TT'_{k})$ where $\TT'_{k}$ are some trees and one of them gives a rise to a measure satisfying \eqref{case2} and whose leaves form a set of positive $|\nu_s|$-measure, leading to a contradiction.
\end{proof}

\section{Martingale rank-one theorem}
In this section we will present an analog of famous Alberti's rank-one theorem. For simplicity, we will formulate and prove only the two-dimensional special case. The extension to higher dimensions is straightforward.

We consider a specific space~$W$. We assume~$q=m^2$ for some~$m\in \N$ and identify the set~$1,2,\ldots,q$ with the group~$(\Z/m\Z)^2$; here~$\Z/m\Z$ is the group of residues modulo~$m$. Then, the elements of~$M_{q\times l}$ are naturally identified with~$\R^l$ valued functions on the `discrete torus'~$(\Z/m\Z)^2$. We set~$l=8$ and also identify~$\R^l$ with the space of~$2\times 2$ complex matrices. With this notation, define the space~$W$ by the formula
\begin{equation}\label{SpecBVspace}
W = \SSet{D\in \R_0^{m^2}\times \R^4}{\begin{aligned}&\exists f,g\colon (\Z/m\Z)^2\to \C \quad \forall i,j =1,2,\ldots,m\\ &D_{i,j} = \begin{pmatrix} f(i+1,j) - f(i,j) & f(i,j+1) - f(i,j)\\
g(i+1,j)-g(i,j)& g(i,j+1) - g(i,j)
\end{pmatrix}
\end{aligned}
}.
\end{equation} 
The space~$\WW$ generated by this~$W$ somehow resembles the space of~$\mathrm{BV}$ maps. In particular, the corollary below may be thought of as a martingale version of Alberti's theorem from~\cite{A}.
\begin{cor}
Let~$W$ be given by~\eqref{SpecBVspace}, let~$\nu \in \WW$. Them,~$\frac{d\nu}{d|\nu|}$ is a matrix of rank one for~$|\nu_s|$ almost all~$x$.
\end{cor}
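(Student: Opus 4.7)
The plan is to combine Theorem~\ref{mainres} with a direct computation of the martingale wave cone $\Lambda(W)$ for the specific subspace $W$ defined in~\eqref{SpecBVspace}. Since Theorem~\ref{mainres} already gives $\frac{d\nu}{d|\nu|}(x) \in \Lambda(W)$ for $|\nu_s|$-a.e.\ $x$, the corollary reduces to the purely algebraic statement that every $v \in \Lambda(W)$ has rank at most one when viewed as a $2\times 2$ complex matrix. No further analytic input will be required beyond the main theorem.

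To establish this algebraic claim, fix $v = \begin{pmatrix} a & b \\ c & d \end{pmatrix} \in \Lambda(W)$ and some $w \in \R_0^{m^2}\setminus\{0\}$ with $v \otimes w \in W$. By the definition of $W$ there exist $f, g\colon (\Z/m\Z)^2 \to \C$ satisfying $\Delta_1 f = aw$, $\Delta_2 f = bw$, $\Delta_1 g = cw$, $\Delta_2 g = dw$, where $\Delta_1, \Delta_2$ denote the two discrete partial differences on the torus $(\Z/m\Z)^2$. Equating the mixed discrete derivatives $\Delta_1 \Delta_2 f = \Delta_2 \Delta_1 f$ and likewise for $g$ forces the pointwise identities
\[
a\,\Delta_2 w = b\,\Delta_1 w \quad \text{and} \quad c\,\Delta_2 w = d\,\Delta_1 w \quad \text{on } (\Z/m\Z)^2.
\]
The crucial observation is that $w$ is real-valued while $a, b, c, d$ are complex: splitting the first identity into real and imaginary parts shows that the real vector $(\Delta_1 w(i,j), \Delta_2 w(i,j)) \in \R^2$ must be orthogonal at every point to both $(\mathrm{Re}\,b, -\mathrm{Re}\,a)$ and $(\mathrm{Im}\,b, -\mathrm{Im}\,a)$. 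Since $w$ is nonzero with zero mean it is nonconstant, so $(\Delta_1 w, \Delta_2 w)$ does not vanish identically; hence those two real coefficient vectors are linearly dependent in $\R^2$, which is equivalent to $(a, b) = \lambda_1 (\alpha_1, \beta_1)$ for some $\lambda_1 \in \C$ and $(\alpha_1, \beta_1) \in \R^2$. The same reasoning applied to the $g$-identity yields $(c, d) = \lambda_2 (\alpha_2, \beta_2)$ with $\lambda_2 \in \C$ and $(\alpha_2, \beta_2) \in \R^2$.

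To conclude, at any point where $(\Delta_1 w, \Delta_2 w) \neq 0$ the real vectors $(\beta_1, -\alpha_1)$ and $(\beta_2, -\alpha_2)$ must themselves be parallel in $\R^2$, for otherwise $(\Delta_1 w, \Delta_2 w)$ would be forced to vanish there. Consequently either one of these vectors is zero (which kills the corresponding row of $v$) or $(\alpha_1, \beta_1)$ and $(\alpha_2, \beta_2)$ span a common real line; in every case both rows of $v$ are complex multiples of a single vector in $\C^2$, so $\mathrm{rank}(v) \leq 1$. The one delicate step is precisely this real/complex bookkeeping, where one exploits the realness of $w$ to turn a single complex linear constraint into a pair of real ones; beyond that the argument is entirely routine.
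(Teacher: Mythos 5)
Your proof is correct, but it takes a different route from the paper's. The paper describes $W$ on the Fourier side of $(\Z/m\Z)^2$: for each nonzero frequency $\gamma$, the constraint on $\hat{D}(\gamma)$ is membership in the two-dimensional space $\Omega(\gamma)$ of $2\times2$ matrices whose rows are both complex multiples of the vector $(e^{2\pi i\gamma_1/m}-1,\,e^{2\pi i\gamma_2/m}-1)$. Since $w$ is nonconstant, $\hat w(\gamma)\neq 0$ for some $\gamma\neq 0$, and then $v\,\hat w(\gamma)\in\Omega(\gamma)$ with $\hat w(\gamma)\neq 0$ forces $v$ to have rank one immediately. You instead work entirely in physical space: the Schwarz identity $\Delta_1\Delta_2 f=\Delta_2\Delta_1 f$ (the discrete curl-free condition) yields $a\,\Delta_2 w=b\,\Delta_1 w$ and $c\,\Delta_2 w=d\,\Delta_1 w$, and the real/complex bookkeeping on the nonzero part of $(\Delta_1 w,\Delta_2 w)$ shows that both rows of $v$ are complex multiples of a common real vector. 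The two arguments are in effect dual: your pointwise curl identity is exactly what the Fourier symbol $\Omega(\gamma)$ encodes. The paper's Fourier version is shorter and extends painlessly to higher-dimensional analogs (it is the direct discrete counterpart of the principal-symbol / wave-cone picture in \cite{DR}), whereas yours is more elementary and makes the curl-free structure visible, at the cost of a few extra case distinctions (vanishing rows, vanishing $\lambda_i$). One small remark: be careful to flag that the parallelism argument needs both $\lambda_1\neq 0$ and $\lambda_2\neq 0$ before invoking orthogonality to $(\beta_i,-\alpha_i)$; you do handle the degenerate cases, but the logical flow reads more cleanly if those are peeled off first.
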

\begin{proof}
To derive the corollary from Theorem~\ref{mainres}, we need to show that any matrix in the martingale wave cone~$ \Lambda(W)$ has rank one. We will describe the cone~$\Lambda(W)$ using the Fourier transform on~$(\Z/m\Z)^2$ (see Section~$7$ in~\cite{Sto2023b} for a more detailed exposition of similar material). We may describe~$W$ as
\begin{multline}\label{eq22}
\Set{D}{\forall \gamma \in (\Z/m\Z)^2 \setminus \{0\}\quad \hat{D}(\gamma) \in \Omega(\gamma)},\ \text{where}\ \Omega(\gamma) =\\
\mathrm{span}\bigg(\begin{pmatrix}e^{2\pi i \gamma_1/m}-1& e^{2\pi i \gamma_2/m}-1\\ 0&0\end{pmatrix},\begin{pmatrix}0&0\\ e^{2\pi i \gamma_1/m}-1& e^{2\pi i \gamma_2/m}-1\end{pmatrix}\bigg).
\end{multline}
Pick some~$v\in \Lambda(W)$, let~$v\otimes w \in W$, where~$w\in \R_0^{m^2}\setminus \{0\}$. Then, with the notation~$\mathcal{F}$ for the Fourier transform,
\begin{equation*}
\mathcal{F}[v\otimes w](\gamma) = v\otimes \hat{w}(\gamma).
\end{equation*}
Since~$w$ is not a constant function,~$\hat{w}(\gamma)\ne 0$ from some~$\gamma \in (\Z/m\Z)^2\setminus \{0\}$. Then, by~\eqref{eq22}, 
\begin{equation*}
v = \Big(e^{2\pi i \gamma_1/m}-1,\; e^{2\pi i \gamma_2/m}-1\Big)\otimes (a,b),
\end{equation*}
where~$(a,b)\in \C^2$ is a non-zero vector.
\end{proof}
\bibliography{biblio} 
\bibliographystyle{alpha}

\end{document}